\theoremstyle{plain}
\newtheorem{mainthm}{Theorem}
\newtheorem{theorem}{Theorem}[section]
\newtheorem{proposition}[theorem]{Proposition}
\newtheorem{definitions}[theorem]{Definitions}
\theoremstyle{definition}
\newtheorem{example}[theorem]{Example}
\numberwithin{equation}{section}
\let\oldmarginpar\marginpar
\renewcommand\marginpar[1]{\-\oldmarginpar[\raggedleft\footnotesize \textcolor{red}{#1}]
{\raggedright\footnotesize\textcolor{red}{#1}}}
\DeclareMathOperator{\IFS}{IFS} 
\DeclareMathOperator{\diam}{diam}
\DeclareMathOperator{\Ls}{Ls}
\begin{document}
\title[A bridge from topological properties into the long-term behavior]{Attractors as a bridge from topological properties to long-term behavior in dynamical systems}
\author[A. Sarizadeh]{Aliasghar Sarizadeh
\address{{Department of Mathematics,  Ilam University}}
\address{{Ilam, Iran.}}
\email{ali.sarizadeh@gmail.com}
\email{a.sarizadeh@ilam.ac.ir}}

\begin{abstract}
This paper refined and introduced some notations (namely attractors, 
physical attractors, proper attractors, topologically exact and topologically mixing) 
within  the context of relations.
We establish necessary and sufficient conditions, 
including that the phase space of a topologically exact system is an attractor for its inverse, 
and vice versa, and that a system is topologically mixing if and only if its phase space is a physical attractor. 

Through iterated function systems (IFSs), we illustrate classes of 
non-trivial topologically mixing and topologically exact IFSs. 
Additionally, we use IFSs to provide an example of topologically mixing system, generated 
by finite of homeomorphisms on a compact metric space, that is not topologically exact. 
These findings connect topological properties with attractor types, providing deeper insights into the long-term dynamics of such systems.
\end{abstract}

\subjclass[2010]{ 37B05, 37B35}
\keywords{relation, iterated function systems, attractor, physical attractor, proper attractor, topologically exact, topologically mixing, equicontinuity.}
\maketitle
\setcounter{tocdepth}{1}

\section{Introduction: preliminaries and results}\label{chap1}

Attractors are central concept to understanding the long-term behavior of dynamical systems,  \cite{M,RT,R1,R2}. In an iterated function system (IFS), an attractor is a minimal compact set that attracts all compact sets in the Hausdorff metric as iterations approach infinite, \cite{Blr,Blw,Bv11,Bv13,bgms}.  However, this standard definition may overlook certain accumulation points, limiting its ability to capture complex dynamical behavior. To address this limitation, we introduce a refined notion of attractors, termed \emph{proper attractors}, which excludes such accumulation points. This paper investigates the interplay between proper attractors, topological exactness (also known as locally eventually onto (leo)), and physical attractors, topological mixing within the framework of relations, providing an insights into the asymptotic dynamics of dynamical systems. 

The study of relations offers a powerful framework for analyzing dynamical systems, particularly for non-invertible maps, whose dynamics are often intricately tied to the behavior of their inverses. By adopting the language of relations, we generalize traditional concepts of attractors and some topological properties, enabling a systematic exploration of their  dependencies. This approach facilitates a deeper understanding of how topological properties relate to the long-term behavior of dynamical systems.

To formalize this framework, consider two non-empty sets \(X\) and \(Y\). A \emph{relation} \(\phi: X \to Y\) is a subset of \(X \times Y\), where \(\phi(x) = \{ y : (x, y) \in \phi \}\) for \(x \in X\). 
Following \cite{A}, the image of a set \(A \subset X\) under \(\phi\) 
is given by \(\phi(A) = \bigcup_{a \in A} \phi(a)\), and the 
inverse relation is defined as \(\phi^{-1} = \{ (y, x) : (x, y) \in \phi \}\). 
A relation \(\phi\) is a map if \(\phi(x)\) is a singleton 
for each \(x \in X\) and \(\phi^{-1}(Y) = X\). A relation$\phi$ is \emph{surjective} if \(\phi(X) = Y\). For relations \(\phi: X \to Y\) and \(\psi: Y \to Z\), the composition \(\psi \circ \phi: X \to Z\) is the projection onto \(X \times Z\) of \((\phi \times Z) \cap (X \times \psi) \subset X \times Y \times Z\). For a relation \(\phi: X \to X\), we define \(\phi^{n+1} := \phi \circ \phi^n\) inductively, with \(\phi^0 := \text{id}_X\), the identity map.

Assuming \(X\) and \(Y\) are compact metric spaces, a relation \(\phi: X \to Y\) is \emph{closed} if it is a closed subset of \(X \times Y\), or equivalently, if \(\phi^{-1}(B)\) is closed for every closed set \(B \subseteq Y\) (i.e., \(\phi\) is upper semi-continuous). It is \emph{lower semi-continuous} if \(\phi^{-1}(B)\) is open for every open set \(B \subseteq Y\), and \emph{continuous} if it is both upper and lower semi-continuous. For a dynamical system \((X, \phi)\), where \(\phi: X \to X\) is a continuous relation on a compact metric space \((X, d)\), we denote by \(\mathcal{K}(X)\) the hyperspace of non-empty compact subsets of \(X\) equipped with the Hausdorff metric \(d_H\).

To capture accumulation points under iterations, we introduce a modified inferior 
limit for a sequence \(\{ B_n \}_{n \in \mathbb{N}}\) of non-empty compact sets \(B_n \subset X\). 
A non-empty proper compact set \(K\) belongs to \(\Ls^* B_n\) if, 
for every pair of disjoint open neighborhoods \(U \supset K\) and \(V\), the set
\[
N(B_n: U, K, V) = \{ i \in \mathbb{N} : U \cap \phi^i(B_j) \neq \emptyset \text{ and } V \cap \phi^i(B_j) = \emptyset \text{ for some } j \in \mathbb{N} \}
\]
is infinite. This modified limit focuses on persistent intersections and disjuncts simultaneously, 
offering a  characterization of long-term behavior distinct from the standard
Kuratowski lower limit.

The following definitions generalize and refine the notion of attractors for relations, introducing physical and proper attractors.
\begin{definitions}\label{defexact}
Let $\phi:X\to X $ be a continuous  relation. The metric space $X$ is:
\begin{itemize}
\item[(i)]
An \emph{attractor} for $\phi$
if, for every non-empty compact set $K$, 
$$d_H(\phi^i(K),X)\to 0 \  \text{as}\ i\to \infty.$$ 
\item[(ii)]
A \emph{physical attractor} for $\phi$
 if,   for every non-empty open set $U\subset X$, there exists a compact set $K\subset U$ so that
$$d_H(\phi^i(K),X)\to 0 \ \text{as} \ i\to \infty.$$
\item[(iii)]\label{defatt}
A \emph{proper attractor} for $\phi$
 if  $X$ is an attractor for $\phi$ and, for every family $\{B_n\}_{n\in\mathbb{N}}$ of non-empty subsets of $X$,
$\Ls^* B_n=\emptyset$. 
\end{itemize}
\end{definitions}
  Let \(\phi\) be a continuous relation on a compact metric space \((X, d)\), and let \(x_1, x_2 \in X\). Equip \(X\) with the metric
\begin{equation}\label{defmetric}
d_\phi(x_1, x_2) := \sup_{n \in \mathbb{N}} d_H(\phi^n(x_1), \phi^n(x_2)).
\end{equation}
A point \(x \in X\) is a \emph{sensitive point} of \(\phi\) if the identity map \(\text{id}_X: (X, d) \to (X, d_\phi)\) is discontinuous at \(x\). The relation \(\phi\) is \emph{sensitive} if there exists \(\epsilon > 0\) such that every non-empty open subset has \(d_\phi\)-diameter at least \(\epsilon\). A point \(x \in X\) is \emph{equicontinuous} if \(\text{id}_X: (X, d) \to (X, d_\phi)\) is continuous at \(x\). We denote the set of all equicontinuous points of \(\phi\) by \(\mathrm{Eq}(\phi)\). The relation \(\phi\) is \emph{equicontinuous} if \(\mathrm{Eq}(\phi) = X\).

Also, we refine the definitions of topological exactness and topological mixing for a continuous relation
 $\phi:X\to X $ on a metric space $X$. 
The relation $\phi$  is called \emph{topologically exact} 
 if for every open subset $W$ of $X$,
$
 \phi^j(W)=X,\ \text{for some} \ j\in \mathbb{N}.
$
The relation $ \phi$ is called 
\emph{topologically mixing} if for every open subsets $W$ and $V$, there exist $J\in \mathbb{N}$ so that
$\phi^j(W)\bigcap V\neq \emptyset$, for every  $ j \geq J$.

Under the compactness assumption of phase space, we establish the relationship between the aforementioned concepts.
\begin{mainthm}\label{topexactattractor}
Let $\phi$ be a continuous  relation on a compact metric space $(X,d)$.  Then:
\begin{itemize}
\item[(i)]\label{ite1t}
$X$ is a physical attractor   if and only if   $\phi$ is topologically mixing.
\item [(ii)] If  $X$ is an attractor for $\phi$, then it is a physical attractor.
\item[(iii)] If  $X$ is an attractor for $\phi$, then it is proper attractor.
\item[(iv)] If $X$ is an  attractor for $\phi$, 
then $\phi$  is equicontinuous i.e. $\mathrm{Eq}(\phi)=X$.
\item[(v)] If $X$ is an attractor for  $\phi$, then $\phi^{-1}$ is topologically exact, and vice versa.
\item[(vi)] If $X$ is an attractor for $\phi$, then  for every $\epsilon>0$ there exists  $n_\dagger\in\mathbb{N}$ 
so that $d_H(\phi^i(x),X)<\epsilon$, for every $i>n_\dagger$ and every $x\in X$. 
\end{itemize}
\end{mainthm}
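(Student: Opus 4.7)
The plan is to handle (ii) and (i) independently, then prove the uniform convergence (vi) by a Baire category argument, and finally deduce (iii), (iv) and both directions of (v) from (vi); the hardest step will be the Baire argument.

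Part (ii) is immediate: for a non-empty open $U$, pick $x\in U$ and apply the attractor hypothesis to the compact set $\{x\}$. For (i), the bridge is that $d_H(A,X)<\epsilon$ says exactly that $A$ is $\epsilon$-dense in $X$, and topological mixing says exactly that $\phi^i(U)$ eventually meets every non-empty open $V$. If $X$ is a physical attractor, a compact $K\subseteq U$ with $\phi^i(K)$ eventually $\epsilon$-dense (for $\epsilon$ smaller than the radius of a ball inside $V$) yields $\phi^i(U)\cap V\supseteq \phi^i(K)\cap V\neq\emptyset$. Conversely, from mixing I would take an open $U_0$ with $K=\overline{U_0}\subseteq U$, cover $X$ by finitely many $\epsilon/2$-balls, and use the maximum of the corresponding mixing thresholds to conclude that $\phi^n(K)\supseteq\phi^n(U_0)$ is $\epsilon$-dense for large $n$.

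For (vi), the attractor condition applied to singletons gives only pointwise convergence $d_H(\phi^n(x),X)\to 0$; the task is uniformity in $x$. For each $\epsilon>0$ set
\[
F_N=\{\,x\in X:\ d_H(\phi^n(x),X)\le\epsilon\ \text{for every}\ n\ge N\,\}.
\]
Continuity of the iterates $\phi^n:X\to\mathcal{K}(X)$ makes each $F_N$ closed; the attractor hypothesis gives $X=\bigcup_N F_N$; and Baire's theorem in the compact metric space $X$ yields an open set $U\subseteq F_{N_0}$ for some $N_0$. To propagate from $U$ to all of $X$, fix $y\in X$: by the attractor for $\{y\}$, $\phi^m(y)$ meets $U$ for some $m(y)$, and lower semi-continuity of $\phi^{m(y)}$ makes the condition $\phi^{m(y)}(y')\cap U\neq\emptyset$ open in $y'$. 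Passing to a finite subcover $W_1,\ldots,W_l$ with thresholds $m_1,\ldots,m_l$ and setting $M=\max m_l$, for any $y\in X$ and $k\ge M+N_0$ we pick $W_l\ni y$ and $z\in\phi^{m_l}(y)\cap U$; then $\phi^k(y)\supseteq\phi^{k-m_l}(z)$, which is $\epsilon$-dense since $z\in F_{N_0}$ and $k-m_l\ge N_0$. Hence $n_\dagger=M+N_0$ works.

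With (vi) in hand the rest is routine. For (iii), if $K\in\Ls^* B_n$ with disjoint open $U\supseteq K$ and $V$, choose $\epsilon$ smaller than the radius of any ball in $V$; then (vi) applied at any point of each $B_j$ gives $V\cap\phi^i(B_j)\neq\emptyset$ for every $j$ and $i>n_\dagger$, so $N(B_n:U,K,V)$ is finite, contradicting the defining property of $\Ls^*$. For (iv), split $d_\phi(x,y)=\sup_n d_H(\phi^n(x),\phi^n(y))$ at the threshold $n_\dagger$ associated with $\epsilon/2$: for $n\le n_\dagger$, continuity of finitely many iterates gives a single $\delta$; for $n>n_\dagger$, the triangle inequality $d_H(\phi^n(x),\phi^n(y))\le d_H(\phi^n(x),X)+d_H(X,\phi^n(y))<\epsilon$ suffices. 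For (v), the forward direction applies (vi) with $\epsilon$ smaller than the radius of a ball in $W$: $\phi^n(x)\cap W\neq\emptyset$ for every $x\in X$ and $n>n_\dagger$, i.e.\ $\phi^{-n}(W)=X$. For the converse, cover $X$ by finitely many $\epsilon/2$-balls, apply topological exactness of $\phi^{-1}$ to each to obtain $j_k$ with $\phi^{-j_k}(B_k)=X$, and use totality of $\phi$ (so that $\phi^{-1}(X)=X$) to extend this to all $j\ge j_k$; with $J=\max j_k$, every $\phi^j(x)$ meets every ball, so $d_H(\phi^j(K),X)\le\epsilon$ for every non-empty compact $K$.
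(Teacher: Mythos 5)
Your proposal is correct, but it organizes the implications genuinely differently from the paper. The paper's chain is: attractor $\Rightarrow$ proper attractor (iii), proved directly by extracting a convergent subsequence of $\phi^{i_\ell}(B_{j_\ell})$ in the compact hyperspace $\mathcal{K}(X)$; then (iv) from (iii) by packaging a sensitivity witness as a non-empty element of $\Ls^*$; then the forward direction of (v) from (iv) via equicontinuity plus a finite cover; and finally (vi) again from (iii). You instead make the uniform statement (vi) the central lemma, proved by a Baire category argument: the closed sets $F_N$ exhaust $X$, one of them has non-empty interior $U$, and lower semi-continuity of $(\phi^m)^{-1}(U)$ together with a finite subcover propagates the uniform $\epsilon$-density from $U$ to all of $X$ via $\phi^{k-m_l}(z)\subseteq\phi^k(y)$. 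After that, (iii), (iv) and the forward half of (v) are each a one-line consequence (a finiteness observation, a triangle inequality, and a choice of $\epsilon$ smaller than a ball in $W$, respectively). Your route arguably isolates the real content better and avoids the hyperspace/$\Ls^*$ manipulations, which the paper states somewhat loosely (e.g.\ the Hausdorff limit of $\phi^{i_\ell}(B_{j_\ell})$ with varying $j_\ell$ is not of the form $\phi^n(B)$ for a fixed $B$, as the paper's proof of (iii) suggests). One place where you are more careful than the paper: in the converse of (v), topological exactness of $\phi^{-1}$ supplies only one $j$ with $\phi^{-j}(V_i)=X$, and upgrading this to all $j\ge j_i$ requires $\phi^{-1}(X)=X$; you flag this explicitly (totality of $\phi$, which is implicit in the attractor definition anyway), whereas the paper asserts the stronger statement without comment. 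Items (i) and (ii) are proved essentially as in the paper.
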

In \cite{BC23}, under monotonic assumption of a continuous map $F:\mathcal{K}(X)\to \mathcal{K}(X)$ 
and $X$ being an attractor, the authors  prove  that for every $\epsilon>0$ there exists  $n_\dagger\in\mathbb{N}$ 
so that $d_H(F^i(x),X)<\epsilon$, for every $i>n_\dagger$ and every $x\in X$. 
Theorem~\ref{topexactattractor} present a different proof for Proposition 2.9 from \cite{BC23} by using the property of proper  attractors for relations.

To better understand the dynamics of the inverse of a topologically
 exact map, we remark the inverse limit set of a relation 
$\phi$ on a space $X$, as
$$
\mathcal{S}=\{(\dots,x_{-1},x_0)\in X^\mathbb{N}: \ x_{-i}\in \phi(x_{-i-1})\}.
$$
For sequence $(\dots,x_{-1},x_0) \in \mathcal{S}$, the point $x_0$ is known as the starting point.
\begin{mainthm}\label{contersen}
Let $\phi$ be a topologically exact map on a compact metric space $X$.  Then:
\begin{itemize}
\item[(i)]
For every $x_0$ in $X$, there exists a sequence
  $\textbf{\textsf{x}} \in \mathcal{S}$ 
with starting point $x_0$ so that the image of sequence  $\textbf{\textsf{x}} $ is dense in $X$.
\item[(ii)] For every $x_0,~y_0\in X$, there exist 
sequences $\textbf{\textsf{x}},~\textbf{\textsf{y}}\in\mathcal{S}$ so that $\liminf_id(x_{-i},y_{-i})=0.$
\end{itemize}
\end{mainthm}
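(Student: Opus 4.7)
The plan is to construct both backward orbits by explicit induction on a countable base, at each stage using topological exactness to route the backward orbits through a carefully chosen open set. First I would record the useful preparatory fact that a topologically exact map $\phi$ on the compact space $X$ is surjective (from $\phi^j(X)=X$ we get $X\subseteq\phi(X)$), and, as a consequence, once $\phi^{j_0}(W)=X$ for some open $W$ and some $j_0$, surjectivity yields $\phi^j(W)=X$ for every $j\ge j_0$. Hence, for any open $W$ and any finite collection of prescribed target points in $X$, one can route backward orbits from those targets so that their $j$-th preimages all lie in $W$, provided $j$ is chosen large enough.

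For (i), fix a countable base $\{V_k\}_{k\in\mathbb{N}}$ of $X$ and build $\mathbf{\textsf{x}}\in\mathcal{S}$ by induction on $k$. At stage $k$, suppose $x_0,\ldots,x_{-N}$ have already been produced with $\phi(x_{-i-1})=x_{-i}$. Applying topological exactness to $V_k$ gives some $j$ with $\phi^j(V_k)=X$; pick $y\in V_k$ with $\phi^j(y)=x_{-N}$ and define $x_{-N-j}:=y$ and $x_{-N-i}:=\phi^{j-i}(y)$ for $1\le i<j$. The forward-orbit definition automatically makes the extension consistent, and by construction the orbit enters $V_k$. Exhausting the base yields an element of $\mathcal{S}$ with starting point $x_0$ whose image meets every $V_k$, hence is dense.

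For (ii), fix a sequence $\epsilon_k\downarrow 0$ and proceed similarly, but now extending two backward orbits in parallel. Given $x_0,\ldots,x_{-N}$ and $y_0,\ldots,y_{-N}$ already in place, pick any $z\in X$ and the ball $B=B(z,\epsilon_k)$; by the preparatory fact choose $j$ large enough that $\phi^j(B)=X$; then select $u,v\in B$ with $\phi^j(u)=x_{-N}$ and $\phi^j(v)=y_{-N}$; set $x_{-N-j}:=u$, $y_{-N-j}:=v$, and fill in the intermediate terms along the forward orbits $\phi^{j-i}(u)$ and $\phi^{j-i}(v)$. Since $u,v\in B$, at the new index $i_k:=N+j$ we have $d(x_{-i_k},y_{-i_k})\le 2\epsilon_k$. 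Iterating in $k$ produces $\mathbf{\textsf{x}},\mathbf{\textsf{y}}\in\mathcal{S}$ with starting points $x_0,y_0$ and $\liminf_i d(x_{-i},y_{-i})=0$.

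The only subtle point, and the main thing one has to verify, is the availability of arbitrarily large times $j$ with $\phi^j(W)=X$ for a fixed open $W$, which is needed so that the inductive step always extends strictly beyond the previous stage; this is precisely what the surjectivity remark of the first paragraph supplies. Once that is in hand, both constructions are straightforward inductive extensions of finite segments of backward orbits, using only that $\phi$ is a map (so that each forward iterate $\phi^{j-i}$ at the chosen preimage is unambiguously defined) and nothing more than the topological exactness hypothesis.
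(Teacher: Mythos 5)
Your proposal is correct and follows essentially the same strategy as the paper: an inductive construction that routes the backward orbit through the members of a countable base (and, for (ii), routes both orbits simultaneously through shrinking balls), filling in the intermediate terms with forward iterates of the chosen preimages. The only difference is that you derive the routing lemma ($\phi^j(W)=X$ for all sufficiently large $j$) directly from topological exactness together with surjectivity, whereas the paper invokes Theorem~\ref{topexactattractor}(v) ($X$ is an attractor for $\phi^{-1}$); your version is self-contained and, in part (ii), somewhat more carefully argued than the paper's sketch.
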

Expanding maps serve as prominent examples of topologically exact maps.
Let $M$ be a smooth compact manifold and $f:M\to M$ be a map of class $C^1$. The map $f$ is expanding if there exists 
$\sigma>1$ and some Riemannian metric on $M$ so that 
$$
\|Df(x)v\|\geq \sigma\|v\|;\ \ \ \forall~x\in M\ \text{and}\ \forall~v\in T_xM.
$$
A well-known result in dynamical systems asserts that every expanding map $f$ is topologically exact map.
Consequently, $M$ is an attractor for $f^{-1}$.

We apply the above framework to iterated function systems (IFSs). 
Let \( \mathcal{F} = \{ f_1, \dots, f_k \} \) be an IFS of continuous maps on a compact metric space \( (X, d) \), with semigroup \( \mathcal{F}^+ \). 
We say that \( \mathcal{F} \)  is an IFS with repelling fixed point, if there exists  $f : X \to X$ belonging to $\mathcal{F}$ with a repelling fixed point $p$.

For \( \omega = (\omega_1, \omega_2, \dots) \in \{ 1, \dots, k \}^{\mathbb{N}} \), define \( f^0_\omega = \text{id}_X \), and
\(
f^n_\omega(x) = f_{\omega_n} \circ \dots \circ f_{\omega_1}(x).
\)
Set 
$$
\mathcal{F}^m=\{f_{\omega_m}\circ\dots\circ f_{\omega_1}:\ f_{\omega_i}\in \mathcal{F}, \text{foll all}\ i=1,\dots,m\},
$$
where $m\in \mathbb{N}$. Define the Hutchinson operator 
\( F: \mathcal{K}(X) \to \mathcal{K}(X) \) by \( F(A) = \bigcup_{i=1}^k f_i(A) \), 
and its extension \( \widetilde{F}: 2^X \to 2^X \) by \( \widetilde{F}(A) = \bigcup_{i=1}^k f_i(A) \).

The $\IFS\mathcal{F}$ is \emph{Topologically mixing} if, for every open \( W, V \subset X \), 
there exists \( J \in N \) such that \( \widetilde{F}^j(W) \cap V \neq \emptyset \) for all \( j \geq J \).
Also, the $\IFS\mathcal{F}$  is \emph{Topologically exact} if, for every open \( U \subset X \), 
there exists \( n \in N \) such that \( \widetilde{F}^n(U) = X \).
\begin{mainthm}\label{attfixed}
Let  $\mathcal{F}=\{f_1,\dots,f_k\}$ be a chain transitive IFS with the
shadowing property on a compact metric space $(X,d)$. 
Then  the phase space $X$ is a physical attractor for the Hutchinson operator of $\mathcal{F}$. 
\end{mainthm}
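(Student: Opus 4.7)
The strategy is to show that the relation $\widetilde{F}\colon X\to X$ defined by $\widetilde{F}(x)=\{f_1(x),\dots,f_k(x)\}$ is topologically mixing; Theorem~\ref{topexactattractor}(i) will then identify $X$ as a physical attractor. Given non-empty open sets $W,V\subset X$, I seek $J\in\mathbb{N}$ with $\widetilde{F}^{j}(W)\cap V\neq\emptyset$ for every $j\ge J$. Fix $w\in W$, $v\in V$, and $\varepsilon>0$ with $B(w,\varepsilon)\subset W$ and $B(v,\varepsilon)\subset V$, and let $\delta>0$ be the shadowing constant associated with $\varepsilon$, so that every $\delta$-pseudo-orbit of $\mathcal{F}$ is $\varepsilon$-shadowed by a genuine orbit of $\mathcal{F}$.

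The core of the proof is to exhibit, for every sufficiently large $n$, a $\delta$-chain from $w$ to $v$ of length exactly $n$. Chain transitivity supplies (a) a $\delta$-chain from $w$ to $v$ of some length $n_0$, and (b) a $\delta$-loop at $w$ of some length $p$; prepending copies of (b) to (a) yields $\delta$-chains of length $n_0+ap$ for every $a\ge 0$. To fill in the residues modulo $p$, one produces a further $\delta$-loop at $w$ of some length $q$ with $\gcd(p,q)=1$. Frobenius' coin theorem then expresses every integer $n\ge J:=n_0+(p-1)(q-1)$ in the form $n_0+ap+bq$ with $a,b\ge 0$, yielding a $\delta$-chain from $w$ to $v$ of length exactly $n$.

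Shadowing now finishes the argument: for each $n\ge J$, the corresponding $\delta$-chain is $\varepsilon$-shadowed by an orbit of $\mathcal{F}$, producing $\omega\in\{1,\dots,k\}^n$ and $x^{\ast}\in B(w,\varepsilon)\subset W$ with $f^{n}_{\omega}(x^{\ast})\in B(v,\varepsilon)\subset V$. Therefore $\widetilde{F}^{n}(W)\cap V\neq\emptyset$ for every $n\ge J$, proving topological mixing and, through Theorem~\ref{topexactattractor}(i), the physical attractor property.

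The step I expect to be hardest is the production of a second loop at $w$ of length coprime to $p$, since chain transitivity by itself controls only existence of loops and not their length modulo $p$. My plan is to exploit the multi-generator structure of $\mathcal{F}$: at some intermediate vertex of an existing loop, substitute a different generator and then use chain transitivity to repair the resulting discrepancy by a short correcting chain, altering the total length by a single step and producing a loop of length $p+1$, automatically coprime to $p$. Calibrating this substitution quantitatively against the continuity moduli of the $f_i$ and the shadowing scale $\delta$ is the principal technical difficulty; once this coprime loop is in hand the remaining combinatorial bookkeeping is routine.
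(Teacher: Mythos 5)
There is a genuine gap at exactly the step you flag as hardest: the construction of a second $\delta$-loop at $w$ whose length is coprime to $p$. Everything else in your argument (concatenating loops, the Frobenius bound, shadowing the resulting chains into genuine orbits landing in $B(v,\varepsilon)$, and the reduction of the theorem to topological mixing via Theorem~\ref{topexactattractor}(i)) is sound, but the coprime loop is asserted only as a ``plan,'' and the proposed repair mechanism does not work as described: substituting a different generator at one vertex of a loop breaks the chain at that vertex, and the ``short correcting chain'' needed to return to the loop can a priori have any length whatsoever, so there is no reason the modified loop has length $p+1$. This is not routine bookkeeping --- for a single map the step can genuinely fail (a period-two swap of two points is chain transitive, yet every $\delta$-loop has even length for small $\delta$, so no loop of coprime length exists and chain mixing fails). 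What you are trying to prove here is precisely the statement ``chain transitive $\Rightarrow$ chain mixing'' for IFSs, which is the content of Theorem~\ref{Niath} (Fatehi Nia, Theorem 3.7), and the paper simply invokes that result rather than re-deriving it. Replacing your loop construction by a citation of that theorem closes the gap and makes your proof complete.

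With that repair, your route differs from the paper's only in the final packaging: you establish topological mixing of the Hutchinson relation and then apply Theorem~\ref{topexactattractor}(i), whereas the paper works directly from the definition of physical attractor, covering $X$ by finitely many sets $V_1,\dots,V_m$ of diameter less than $\varepsilon$, using chain mixing plus shadowing to show that $F^n(\overline{B(x,\varepsilon)})$ meets every $V_i$ for all large $n$, and concluding $d_H(F^n(\overline{B(x,\varepsilon)}),X)<\varepsilon$. The two endgames are essentially equivalent in view of Theorem~\ref{topexactattractor}(i) and Proposition~\ref{equform}; yours is slightly cleaner conceptually, the paper's is self-contained at the metric level. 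One minor point common to both: the shadowing property as defined in the paper applies to infinite pseudo-orbits, so the finite $\delta$-chains must be extended to infinite ones (e.g.\ by continuing with a true orbit after reaching $v$) before shadowing is invoked; this is routine but worth a sentence.
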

\begin{mainthm}\label{replling}
Let  $\mathcal{F}=\{f_1,\dots,f_k\}$ be a family of homeomorphisms on compact metric space $X$.
If $\mathcal{F}$ is backward minimal with repelling fixed point then it is topologically exact.
\end{mainthm}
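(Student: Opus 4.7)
The plan is to prove the IFS is topologically exact by showing that for every non-empty open $U\subset X$ a single iterate $\widetilde{F}^N(U)$ equals $X$. The argument will combine two complementary ingredients: the repelling fixed point $p$ of some $f\in\mathcal{F}$ lets iterates of $f$ \emph{swallow} any open neighborhood of $p$, while backward minimality lets compositions from $\mathcal{F}^{+}$ \emph{spread} a single neighborhood of $p$ across all of $X$. The skeleton is to fix a convenient repelling open neighborhood $V^{*}$ of $p$, show separately that $\widetilde{F}^{M}(V^{*})=X$ for some $M$ (the spreading step) and that any open set $U$ eventually contains a copy of $V^{*}$ inside some $\widetilde{F}^{n+\ell}(U)$ (the concentrating step), and then concatenate.

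First I would extract the data of the repelling point: choose an open neighborhood $V^{*}$ of $p$ with $\overline{V^{*}}\subset f(V^{*})$ and $\bigcap_{n\ge 0} f^{-n}(\overline{V^{*}})=\{p\}$. From this I only need two consequences: the nested inclusion $V^{*}\subset f^{n}(V^{*})$ for all $n\ge 0$, and the fact that for every open neighborhood $W$ of $p$ one has $f^{-n}(\overline{V^{*}})\subset W$ for all sufficiently large $n$, equivalently $V^{*}\subset f^{n}(W)$ eventually. The second is a standard nested-compacta argument in the compact metric space $X$.

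For the spreading step I would apply backward minimality to each $q\in X$ together with the open set $V^{*}$: there exists $h_{q}\in(\mathcal{F}^{-1})^{+}$ with $h_{q}(q)\in V^{*}$, equivalently $q\in g_{q}(V^{*})$ for $g_{q}=h_{q}^{-1}\in\mathcal{F}^{m_{q}}$. Since each $g_{q}$ is a homeomorphism, the sets $g_{q}(V^{*})$ are open, so compactness of $X$ yields a finite subcover $X=g_{q_{1}}(V^{*})\cup\cdots\cup g_{q_{r}}(V^{*})$. To consolidate the varying lengths $m_{q_{j}}$ into a single $M=\max_{j} m_{q_{j}}$, I would pre-compose each $g_{q_{j}}$ with $f^{M-m_{q_{j}}}$; the monotonicity $V^{*}\subset f^{M-m_{q_{j}}}(V^{*})$ then forces $g_{q_{j}}(V^{*})\subset(g_{q_{j}}\circ f^{M-m_{q_{j}}})(V^{*})\subset\widetilde{F}^{M}(V^{*})$, giving $\widetilde{F}^{M}(V^{*})=X$.

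For the concentrating step, given arbitrary non-empty open $U\subset X$, backward minimality applied at the point $p$ produces $h\in(\mathcal{F}^{-1})^{+}$ with $h(p)\in U$; hence $p\in g(U)\subset\widetilde{F}^{\ell}(U)$ for $g=h^{-1}\in\mathcal{F}^{\ell}$, and $g(U)$ is an open neighborhood of $p$. The repelling consequence recorded above then supplies $n$ with $V^{*}\subset f^{n}(g(U))\subset\widetilde{F}^{n+\ell}(U)$. Combining with the spreading step yields $X=\widetilde{F}^{M}(V^{*})\subset\widetilde{F}^{M+n+\ell}(U)$, so $N:=M+n+\ell$ works. The main obstacle I expect is the length-alignment trick in the spreading step: since $\widetilde{F}^{n}$ is not monotone in $n$, a pointwise cover of $X$ by various $g_{q}(V^{*})$ with different $m_{q}$ does not immediately live in one iterate, and only the nested inclusion $V^{*}\subset f(V^{*})$ furnished by the repelling hypothesis lets one pad each composition up to the common length $M$.
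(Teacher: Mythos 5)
Your proof is correct and follows essentially the same route as the paper's: both use backward minimality once to pull the repelling fixed point $p$ into an iterate of the given open set, an $f$-expanding neighborhood of $p$ to pad composition lengths (the inclusion $U\subset f(U)$ giving $U\subset\widetilde F^i(U)$), and backward minimality a second time to cover $X$ by finitely many images of that neighborhood. The only cosmetic difference is that you fix the repelling neighborhood $V^*$ once and let $f^n(g(U))$ swallow it, whereas the paper shrinks the neighborhood of $p$ to fit inside $g(W)$; the length-alignment trick you flag as the main obstacle is exactly the one the paper uses.
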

The presence of a repelling fixed point in Theorem~\ref{replling} plays a critical role in promoting topological mixing to topological exactness, as it ensures that open sets expand to cover the entire space under iterations.

This paper is organized as follows. In Section~\ref{top}, we prove Theorem~\ref{topexactattractor} and Theorem~\ref{contersen}, establishing the relationships between topological exactness, mixing, and attractor types.
 In Section~\ref{sec3}, we apply our framework to IFSs, proving theorems that characterize topologically mixing and exact systems (e.g., Theorems~\ref{attfixed} and \ref{replling}) and providing a concrete example of a topologically mixing but not topologically exact IFS. 


%
%
%
%
%
%
%
%
%
%
%
%
%
%
%
%
\section{Topological exactness and attractors: proof of Theorem \ref{topexactattractor}}\label{top}
Next  proposition  characterizes attractors in terms of intersection of the iterations of compact sets with open sets.
It is remarkable that the IFS version of the following proposition, 
can be found in reference \cite{BC23}, but for 
 completeness of the comparison we state and reproof relation form of it.
\begin{proposition}\label{equform}
Let  $\phi$ be a continuous relation  on a compact metric space $X$. Then
$X$ is an attractor for $\phi$ if and only 
 if  for every $K\in\mathcal{K}(X)$   and  every open subset $W$  of $X$
 there exists $J_K\in \mathbb{N}$ so that
$\phi^j(K)\bigcap W\neq \emptyset$, for every  $ j \geq J_K$.
\end{proposition}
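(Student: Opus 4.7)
The plan is to unwind the Hausdorff distance condition \(d_H(\phi^i(K), X) \to 0\) into a statement about \(\epsilon\)-density of \(\phi^i(K)\) in \(X\). Since \(\phi^i(K)\subseteq X\), we have
\[
d_H(\phi^i(K),X)=\sup_{x\in X}d(x,\phi^i(K)),
\]
so the attractor condition is equivalent to saying that for every \(\epsilon>0\) there exists \(J=J(K,\epsilon)\) with \(\phi^i(K)\) being \(\epsilon\)-dense in \(X\) for all \(i\geq J\). Both directions of the proposition are immediate reformulations of this.

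For the forward direction, I would fix a compact \(K\) and an open \(W\subseteq X\). Pick any \(x_0\in W\) and choose \(\epsilon>0\) with \(B(x_0,\epsilon)\subseteq W\). Applying the attractor property at this \(\epsilon\) yields a \(J_K\) such that for every \(i\geq J_K\) the set \(\phi^i(K)\) meets \(B(x_0,\epsilon)\), and in particular meets \(W\).

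The backward direction is the main content, and I would argue by contraposition. Assume \(X\) is not an attractor for \(\phi\); then for some \(K\in\mathcal{K}(X)\), some \(\e>0\), and some subsequence \(i_n\to\infty\),
\[
d_H(\phi^{i_n}(K),X)\geq \e,
\]
so there exist points \(x_n\in X\) with \(B(x_n,\e)\cap \phi^{i_n}(K)=\emptyset\). Using compactness, cover \(X\) by finitely many open balls \(B(y_1,\e/2),\dots,B(y_m,\e/2)\). By the pigeonhole principle, some ball \(B(y_\ell,\e/2)\) contains \(x_n\) for infinitely many \(n\); for each such \(n\),
\[
B(y_\ell,\e/2)\subseteq B(x_n,\e),\qquad \text{hence}\qquad B(y_\ell,\e/2)\cap \phi^{i_n}(K)=\emptyset.
\]
Taking \(W=B(y_\ell,\e/2)\), this contradicts the assumption that \(\phi^j(K)\cap W\neq\emptyset\) for all sufficiently large \(j\).

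The only subtlety is the compactness/pigeonhole step: one must extract a single open set \(W\) that \(\phi^{i_n}(K)\) misses infinitely often, rather than a sequence of distinct ones. Once that step is in place the rest is bookkeeping. No use of lower semicontinuity or the structure of \(\phi\) beyond the iterates being subsets of \(X\) is needed, so the argument works uniformly for relations and maps.
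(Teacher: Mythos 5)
Your proof is correct and follows essentially the same route as the paper: both directions reduce to identifying $d_H(\phi^i(K),X)<\epsilon$ with $\epsilon$-density of $\phi^i(K)$ and using a finite cover of $X$ by balls of radius $\epsilon/2$. The only difference is that you run the converse by contraposition and pigeonhole where the paper argues directly (each ball of a finite cover is eventually hit, so $\phi^n(K)$ is eventually $\epsilon$-dense), which is a purely stylistic variation.
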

\begin{proof}
 Assume \(X\) is an attractor for \(\phi\). 
 For any open set \(W \subset X\) and point \(p \in W\), 
 there exists \(\epsilon > 0\) such that the ball \(B(p, \epsilon) \subseteq W\). 
 Since \(X\) is an attractor, for $K\in\mathcal{K}(X)$, \(d_H(\phi^j(K), X) \to 0\) as \(j \to \infty\). Thus, there exists \(J_K \in \mathbb{N}\) such that \(d_H(\phi^j(K), X) < \epsilon\) for all \(j > J_K\). If \(\phi^j(K) \cap W = \emptyset\) for some \(j > J_K\), then \(d_H(\phi^j(K), X) \geq d(p, \phi^j(K)) \geq \epsilon\), a contradiction. Hence, \(\phi^j(K) \cap W \neq \emptyset\) for all \(j \geq J_K\).

Conversely, let \(\epsilon > 0\),  \(K \in \mathcal{K}(X)\), and  \(\{B_i\}_{i \in \mathbb{N}}\) be an open cover of \(X\) with \(\diam(B_i) < \epsilon/2\). By compactness, there exists a finite subcover \(\bigcup_{j=1}^m B_{i_j} = X\). By assumption, for each \(i_j\), there exists \(n_{i_j} \in \mathbb{N}\) such that \(\phi^n(K) \cap B_{i_j} \neq \emptyset\) for all \(n > n_{i_j}\). Set \(J_K = \max\{n_{i_j} : j = 1, \ldots, m\}\). For \(n > J_K\), \(\phi^n(K) \cap B_{i_j} \neq \emptyset\) for all \(j = 1, \ldots, m\), so \(\phi^n(K)\) intersects every set in the cover, implying \(d_H(\phi^n(K), X) < \epsilon\). Thus, \(d_H(\phi^n(K), X) \to 0\) as \(n \to \infty\), and \(X\) is an attractor.
\end{proof}


\begin{proof}[Proof of Theorem~\ref{topexactattractor} (i)]
assume  $X$ is a physical attractor. Let $V,~W$ be non-empty open subsets.
Since $V$ is a non-empty open set, there is a $p\in V$  and $\epsilon>0$ so that $B(p,\frac{\epsilon}{2})\subset V$.
As $X$ is a physical attractor, there exists a compact subset $K$ of $W$ so that $d_H(\phi^j(K),X)\to 0$ as $j\to \infty$.
Thus, there exists $J_K\in \mathbb{N}$ so that $d_H(\phi^j(K),X)<\epsilon$, for every $j>J_K$.
This implies  $\phi^j(K)\cap V \neq \emptyset$, and so  $\phi^j(W)\cap V \neq \emptyset$, for all $j>J_K$.
Hence, $\phi$ is topologically mixing.

Conversely, assume  $\phi$ is topologically mixing.  For any 
$\epsilon>0$, let $W\subset X$ be a non-empty open set and let $\{B_i\}_{i\in \mathbb{N}}$ be an open cover 
of $X$ with $\text{diam}(B_i)<\epsilon /2$, for every $i\in \mathbb{N}$.
As $W$ is an open subset of metric space $X$, choose  a non-empty open set $W'\subset W$ such that $\overline{W'}\subset W$.
Since $\phi$ is topologically mixing, for each $i\in \mathbb{N}$, there exists $n_i\in \mathbb{N}$ so that 
$\phi^n(W')\bigcap B_i \neq\emptyset$, for all $ n>n_i.$
By compactness of $X$, there exists a finite subcover $m\in \mathbb{N}$ with $\bigcup_{j=1}^m B_{i_j}=X$.
Set
$J_K=\max\{i_j:~j=1,\dots,m\}.$
For $n > J_K$, $\phi^n(\overline{W'}) \cap B_{i_j} \neq \emptyset$ for all $j$, and so $d_H(\phi^n(\overline{W'}), X) < \epsilon$. Thus, $d_H(\phi^n(\overline{W'}), X) \to 0$, and $X$ is a physical attractor.

\textit{(ii)} 
Let $X$ be an attractor for $\phi$. For any non-empty open set $U\subset X$, there exist 
a compact set $K\subset U$. Since $X$ is an attractor, 
$d_h(\phi^{n}(K), X)\to 0$, as $n\to \infty$.
Thus, $X$ is a physical attractor.

\textit{(iii)} 
Assume that \(X\) is an attractor for \(\phi\).
Suppose there exists a family \(\{ B_n \}_{n \in \mathbb{N}}\) 
such that \(\Ls^* B_n \neq \emptyset\). 
Then there exists a compact set \(K \neq X\) 
such that \(N(B_n: U, K, V)\) is infinite for some disjoint open sets 
\(U \supset K\) and \(V\). 
Thus, there exist sequences \(\{ i_\ell \}_{\ell \in \mathbb{N}}\) 
with \(i_\ell \to \infty\) and \(\{ j_\ell \}_{\ell \in \mathbb{N}}\)  in natural numbers
such that \(U \cap \phi^{i_\ell}(B_{j_\ell}) \neq \emptyset\) and 
\(V \cap \phi^{i_\ell}(B_{j_\ell}) = \emptyset\). Since \(\mathcal{K}(X)\) is compact, 
\(\{ \phi^{i_\ell}(B_{j_\ell}) \}_\ell\) has a convergent subsequence with limit \(B \in \mathcal{K}(X)\). 
This contradicts \(d_H(\phi^n(B), X) \to 0\), as \(n\to \infty\). Hence, \(\Ls^* B_n = \emptyset\), and \(X\) is a proper attractor.

\textit{(iv)} Assume \(X\) is an attractor for \(\phi\). Suppose \(\mathrm{Eq}(\phi) \neq X\), 
so there exists a sensitive point \(x \in X\) with \(\epsilon > 0\) 
such that every neighborhood of \(x\) has \(d_\phi\)-diameter at least \(\epsilon\). 
Thus, there exist sequences \(\{ x_i \}_{i \in \mathbb{N}} \subset X\) with
 \(d(x_i, x) \to 0\) and \(\{ n_i \}_{i \in \mathbb{N}} \subset \mathbb{N}\) with
  \(n_i \to \infty\) such that \(d_H(\phi^{n_i}(x), \phi^{n_i}(x_i)) > \epsilon\). 
  Since \(\phi\) is continuous, \(\{ \phi^{n_i}(x_i) \}_{i \in \mathbb{N}}\) 
  has a convergent subsequence in \(\mathcal{K}(X)\) with limit \(Y \neq X\). 
Set \(B_i = \{ x_i \}\), for all $i \in \mathbb{N}$. Then \(Y \in \Ls^* B_i\), 
contradicting the proper attractor condition (\(\Ls^* B_i = \emptyset\)). 
Thus, \(\mathrm{Eq}(\phi) = X\), and \(\phi\) is equicontinuous.

\textit{(v)} 
Assume  $X$ is an attractor for $\phi$. As $X$ is compact, by item (iv) of this theorem,  \(\phi\) is equicontinuous.
Let $W$ be a non-empty open subset of $X$.
By compactness of $X$, the phase space  $X$ is an attractor for equicontinuous relation $\phi$.
So, for every $x\in X$ there exist $\delta_x>0$ and  
$n_x\in\mathbb{N}$ so that $\phi^n(y)\cap W\neq\emptyset$, for every $n\geq n_x$ and
$y\in B(x,\delta_x)$.

Since $X$ is compact and  $\phi$ is continuous, 
there exist $\{x_i\}_{i=1}^k\subset X$, $\{\delta_{x_i}\}_{i=1}^k\subset (0,\infty)$ 
and $\{n_{x_i}\}_{i=1}^k\subset \mathbb{N}$ so that 
$ \{B(x_i,\delta_{x_i})\}_{i=1}^k$ is an open cover of $ X$ and  $\phi^n(y)\cap  W\neq\emptyset$, for all $n\geq\max_{1\leq i \leq k} n_{x_i}$ and for every $y\in X$.

This argument shows that $\phi^{-n}(W)=X$, for every $n\geq\max_{1\leq i \leq k} n_{x_i}$
which finishes the proof of topological exactness of $\phi^{-1}$.

Conversely, assume  $\phi^{-1}$ is topologically exact. For any   $\epsilon>0$, let
$(V_i)_{i=1}^s$  be a finite open cover of $X$ with 
$\text{diam}(V_i)<\frac{\epsilon}{2}$, for all $i=1,\dots,s.$
As $\phi^{-1}$ is topologically exact,  for  each $i$,  there exists 
$n_i\in \mathbb{N}$ so that $\phi^{-n}(V_i)=X$, for every $n>n_i.$
Thus,  $\phi^n(x)\bigcap V_i\neq\emptyset$, for every $x\in X$ and $n>n_i$.
For any $x\in X$, $d_H(\phi^n(x),X)<\epsilon$, for all $n>\max\{n_i:\  1\leq i\leq s\}$.
Hence, $X$ is an attractor and so it is proper attractor.

\textit{(vi)} 
Assume  $X$ is an attractor for $\phi$.
By Item (iii), $X$ is a proper attractor.
 Suppose by way of contradiction  for some $\epsilon>0$ and every  $n\in\mathbb{N}$, 
we have $d_H(\phi^i(x),X)\geq\epsilon$, for some  $i>n$ and some $x\in X$.
For every $n\in \mathbb{N}$, choose $i_n\in \mathbb{N}$ and $x_n$ in $X$ so that 
$d_H(\phi^{i_n}(x_n),X)\geq\epsilon$. Take $B_n=\{x_n\}$, for every $n\in \mathbb{N}$.
Since $\{\phi^{i_n}(B_n)\}_n$ is a sequence of compact sets and $\mathcal{K}(X)$ is compact space, 
$\Ls^* B_n\neq \emptyset$, which is a contradiction with definition of proper attractor.
\end{proof}

\begin{proof}[Proof of Theorem~\ref{contersen}]\textit{(i)} 
Since \(X\) is compact, let \(\{ U_n \}_{n \in \mathbb{N}}\) be a countable basis of \(X\).  
Fix \(x_0\) in \(X\).
By Theorem~\ref{topexactattractor}, if \(\phi\) is topologically exact, \(X\) is an attractor for \(\phi^{-1}\).
So, there exists \(j_1 \in \mathbb{N}\) such that \(\phi^{-i}(x_0)\bigcap U_1\neq \emptyset\), for every \(i\geq j_1\). 
Fix \(x_{-j_1}\in \phi^{-j_1}(x_0)\bigcap U_1\). Clearly, \(\phi^{j_1}(x_{-j_1})=x_0\).
By repeating the mentioned process, we can take  subsequences 
\(\{j_n\}_{n\in \mathbb{N}}\) in \(\mathbb{N}\) and
\(\{x_{-t_n}\}_{n\in \mathbb{N}}\) in \(X\) so that  
 \(t_1=j_1\), \(t_n=\sum_{i=1}^nj_i\) and \(\phi^{j_n}(x_{-t_n})=x_{-t_{n-1}}\), for every \(n\geq 2\).
 Construct a sequence 
\(\mathbf{x} = (\dots,x_{-i},\dots, x_{-2}, x_{-1}, x_0) \in \mathcal{S}\),
by selecting
$$x_i=\left\{
  \begin{array}{ll}
    x_{-t_n}, & \hbox{if  $ \ \ \ \exists\ t_n, \ i=t_n$; } \\
    \phi^{i-t_n}(x_{-t_n}), & \hbox{if $ \ \ \ \exists \ t_n, \ t_n<i<t_{n-1}$,}
  \end{array}
\right.
$$
for every  $i\in\mathbb{N}$.
 Since \(\{ U_n \}\) is a basis, the set \(\{ x_{-i} : i \in \mathbb{N} \}\) is dense in \(X\).

\textit{(ii)} 
Fix $x_0$. As the Item (i), we can construct a sequence $\mathbf{x}$ in $\mathcal{S}$. 
In this way,  we used this fact that: for any \(\epsilon > 0\), 
there exists \(j \in \mathbb{N}\) such that \(\phi^j(B(x_0, \epsilon)) = X\). 
Thus, there exists \(x_{-j} \in \phi^{-j}(x_0) \cap B(y_0, \epsilon)\).

Similarly, there exists \(y_{-j} \in \phi^{-j}(y_0) \cap B(x_0, \epsilon)\). 
To complete the proof, it is enough to order $\{U_i\}$ in term of diameters so that $\liminf_i\text{diam}(U_i)=0$.
Thus the  sequences \(\{ x_{-i} \}\) and \(\{ y_{-i} \}\) can be chosen such that \(\liminf_{i \to \infty} d(x_{-i}, y_{-i}) = 0\).

\end{proof}
%



\section{Topologically mixing and topologically exact IFSs}\label{sec3}
To illustrate  significant  topological exact systems, 
we appeal to the Hutchinson operator induced by 
iterated function systems (IFSs).
Assume that $\mathcal{F}=\{f_1,\dots,f_k\}$  is an 
iterated function system~(IFS) consisting in a finite  family of continuous maps $f_i$ on the compact metric space $X$.
The  IFS $\mathcal{F}$ is called topologically exact,
 if 
$
 {\widetilde{F}}^n(U)=X,
$
for every open subset $U$ of $X$ and some $n\in\mathbb{N}$.
The IFS $\mathcal{F}$ is called topologically mixing, if  for every open subset $W$ and $V$, there exist $J\in \mathbb{N}$ so that
$\widetilde{F}^j(W)\bigcap V\neq \emptyset$, for every  $ j \geq J$.

We now generalize  notions of chain transitivity and shadowing to characterize topological mixing in IFSs.
A sequence
$\xi=\{x_i\}_{i=0}^\infty \subset X$ is a $\delta -$chain of  $\mathcal{F}$ if
there exists $\omega\in \{1,\dots,k\}^\mathbb{N}$ so that 
$d(x_{i+1}, f_{\omega_i}(x_i)) < \delta$, for all $i \in\mathbb{N}$.
A $\delta -$chain from $x$ to $y$ in $X$  is a finite $\delta -$chain 
 $\xi=\{x_i\}_{i=0}^n $ so that $x_0=x$ and $x_n=y$.
An IFS $\mathcal{F}$ is  \emph{chain transitive} if for any $\delta>0$ and any two points $x$ and $y$
there exists a $\delta -$chain from $x$ to $y$.
An IFS $\mathcal{F}$  is  $\delta-$chain mixing, if there exists $N \in \mathbb{N}$
such that  for each $x, y \in X$ and each $n > N$, there exists a $\delta -$ chain  with length $n$ from $x$ to $y$. 
Also, an IFS $\mathcal{F}$ is \emph{chain mixing} if it is $\delta - $ chain mixing, for every $\delta> 0$.
An IFS $\mathcal{F}$  has the \emph{shadowing property}
if for
each $\epsilon > 0$ there exists $\delta>0$ such that for 
any $\delta$ pseudo-orbit $\xi = \{x_i\}$ driven by $\omega$ one can find a point
$y$ in $X$ such that $d(x_i,f_\omega^i(y))<\epsilon$, for all $i\in \mathbb{N}$.

It is also worth noting that  the IFS type of Corollary 14 in   \cite{RW08}  can be found in \cite{Nia}.
\begin{theorem}[Theorem 3.7 \cite{Nia}]\label{Niath}
Let   $\mathcal{F}=\{f_1,\dots,f_k\}$ be an IFS  on  $X$. Then 
$\mathcal{F}$ is chain mixing if and only if it is chain transitive.
\end{theorem}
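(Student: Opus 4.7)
The direction \textit{chain mixing implies chain transitive} is immediate from the definitions, so the plan focuses on the substantive converse. Fix $\delta > 0$ and a basepoint $p \in X$, and set
\[
L(p) = \{n \in \mathbb{N} : \text{there exists a } \delta\text{-chain of length } n \text{ from } p \text{ to } p \}.
\]
Concatenation of loops shows $L(p)$ is closed under addition, and chain transitivity applied to the pair $(p,p)$ ensures $L(p) \neq \emptyset$. By the classical fact that a subsemigroup of $(\mathbb{N},+)$ with greatest common divisor $1$ contains every sufficiently large integer, it suffices to prove $\gcd L(p) = 1$.

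Granting this, I would derive $\delta$-chain mixing by a gluing step. Chain transitivity supplies, for each pair $(x,y) \in X \times X$, a $\delta$-chain $\alpha$ from $x$ to $p$ of some length $\ell$ and a $\delta$-chain $\beta$ from $p$ to $y$ of some length $r$. Splicing $\alpha$, a loop at $p$ of length $n - \ell - r$, and $\beta$ produces a $\delta$-chain of length $n$ from $x$ to $y$, valid whenever $n - \ell - r \geq N_p$, where $N_p$ is the threshold above which $L(p)$ contains every integer. To obtain a threshold $N$ independent of $(x,y)$, I would invoke uniform continuity of the finitely many $f_i$ on the compact space $X$: construct the auxiliary $\alpha, \beta$ using $\delta/2$ in place of $\delta$ at the centers of a finite $\eta$-net of $X$ (with $\eta$ small enough that $\eta$-perturbations of endpoints preserve the $\delta$-chain property), and bound $\ell + r$ uniformly across the net.

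The principal obstacle is showing $\gcd L(p) = 1$. I would argue by contradiction: assume $d := \gcd L(p) \geq 2$ and partition $X$ into classes $X_0, \dots, X_{d-1}$, where $X_j$ consists of points reachable from $p$ by $\delta/2$-chains of length congruent to $j$ modulo $d$. Because every loop at $p$ has length divisible by $d$, these classes are well defined, and one verifies that each $f_i$ approximately carries $X_j$ into $X_{j+1 \bmod d}$. Then for any $x \in X_0$ and $y \in X_1$, chain transitivity forces a $\delta$-chain from $x$ to $y$ which, concatenated with fixed chains $p \to x$ and $y \to p$, yields a loop at $p$ whose length is not divisible by $d$, contradicting the definition of $d$. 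The delicate point, and the technical core of the proof, is calibrating the perturbation scales ($\delta$ versus $\delta/2$ together with the $\eta$ from uniform continuity) so that the phase classes are genuinely preserved by every $f_i$ while chain transitivity still delivers a chain across classes.
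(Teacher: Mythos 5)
The paper does not actually prove this statement---it is imported verbatim from \cite{Nia} (as the IFS analogue of Corollary 14 in \cite{RW08})---so there is no in-paper argument to compare yours against. Judged on its own terms, your proposal has the right architecture for this kind of result (loop lengths at a basepoint form an additive subsemigroup; numerical semigroup fact; splice chains through $p$; uniformize over a finite net), and the easy direction and the gluing step are fine. The problem is the step you yourself identify as the core: the claim $\gcd L(p)=1$.

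That claim does not follow from chain transitivity, and your contradiction argument collapses exactly where you try to produce a loop of length not divisible by $d$. If the classes $X_0,\dots,X_{d-1}$ are well defined and each $f_i$ carries $X_j$ into $X_{j+1 \bmod d}$, then a chain $p\to x$ with $x\in X_0$ has length $\equiv 0$, the chain $x\to y$ with $y\in X_1$ has length $\equiv 1$, but \emph{every} return chain $y\to p$ then has length $\equiv -1 \pmod d$ (since $p\in X_0$), so the assembled loop has length $\equiv 0+1+(d-1)\equiv 0 \pmod d$. No contradiction is reached; the argument is circular. Indeed, with the definitions as stated in this paper the equivalence is false without further hypotheses: take $X=\{a,b\}$ and $f_1=f_2$ the transposition. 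This IFS is chain transitive, but for small $\delta$ every $\delta$-chain from $a$ to $a$ has even length, so $L(a)=2\mathbb{N}$ and the system is not chain mixing. Any correct proof must use an additional ingredient that kills the cyclic decomposition --- in the single-map setting of \cite{RW08} this is connectedness of $X$ (the classes $X_j$ are open and cover $X$, so if $X$ is connected two of them meet, forcing $d=1$), or one states the result as a dichotomy between chain mixing and the existence of a regular periodic decomposition. You should either import such a hypothesis explicitly or flag that the theorem as quoted cannot be proved from chain transitivity alone.
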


The classical topological notions  chain transitivity and shadowing property imply that
the phase space is  physical attractor.

\begin{proof}[Proof of Theorem~\ref{attfixed}]
Let   $\mathcal{F}=\{f_1,\dots,f_k\}$ be a chain transitive  IFS  on  $X$ with the shadowing property.
By Theorem~\ref{Niath}, the IFS $\mathcal{F}$ is chain mixing.
Fix $x\in X$.  For any $\epsilon>0$ there exists  $\delta>0$  so that every $\delta -$chain can be $\epsilon$-shadowed.
Let  $V_1,\dots,V_m\subseteq X$ be open sets covering $X$  with $\max_{1\leq i\leq m}\text{diam}(V_i)<\epsilon$.
Choose $y_i\in V_i$, for every $i=1,\dots,m $.
Since, $\mathcal{F}$ is chain mixing, there exists an integer  $N_{i,\epsilon}$ so that for 
each $n > N_{i,\epsilon}$, there exists a $\delta - $ chain of length $n$ from $x$ to $y_i$. 
Put $N_\epsilon=\max\{N_{i,\epsilon}:~ 1\leq i\leq m\}$.

By the shadowing property, for every $1\leq i \leq m$, every $\delta$-chain of length  $n>N_\epsilon$ from $x$ to $y_i$ 
$\epsilon$-shadowed by some $z(i)\in B(x,\epsilon)$.
Thus, for $n>N_\epsilon$,
\[
d_H(F^n(\overline{B(x,\epsilon)}), X)<\epsilon.
\]
For any set $B$ with non-empty interior, choose $x$  in the interior of $B$ and  $0<r$ so that $B(x,r)\subset B$.
Moreover, by choosing $\epsilon<r$  and repeating the mentioned process  for $x$,
$F^n(\overline{B(x,\epsilon)})\subset F^n(B)$ and so 
\[
d_H(F^n(\mathcal{B}), X)<\epsilon; \ \ \ \forall~n>N_\epsilon.
\]
Since $\epsilon$ is arbitrary, we have $X$ is a physical attractor.

\end{proof}
A non-empty subset $A$ of $X$ is said to be backward (resp. forward) invariant
for an  IFS $\mathcal{F}=\{f_1,\dots,f_k\}$ if for all $f \in \mathcal{F}^+$ the following holds:
\[\emptyset\neq f^{-1}(A)=\{x\in X: f(x)\in A\} \subseteq A\ (\text{resp.}\ f(A)\subseteq A).\] 
An IFS $ \mathcal{F}$ is backward minimal (resp. forward minimal), i.e. the unique backward (resp. forward)
invariant non-empty closed set is the whole space $X$.

Now, let $f:X\to X $ be a map on $X$ and $id_X:X\to X$ be the identity map on $X$.
Notice that the iteration of the extension of Hutchinson operator of $\{f,id_X\}$ record all  previous actions.
\begin{proposition}[recording IFSs]\label{record}
Let  $\mathcal{F}$ be a minimal IFS.
Then  the  IFS $\mathcal{F}\cup id_X$ is topologically exact 
and so $X$ is a proper attractor for it.
\end{proposition}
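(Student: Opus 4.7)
The plan is to exploit the monotonicity that adjoining $id_X$ induces. Writing $\widetilde F$ and $\widetilde G$ for the extensions to $2^X$ of the Hutchinson operators of $\mathcal{F}$ and $\mathcal{F}\cup\{id_X\}$ respectively, the presence of $id_X$ makes $\widetilde G$ extensive ($\widetilde G(A)\supseteq A$), whence
\[
\widetilde G^{n}(A)=A\cup\widetilde F(A)\cup\widetilde F^{2}(A)\cup\cdots\cup\widetilde F^{n}(A),
\]
a non-decreasing sequence in $n$. Topological exactness then amounts to showing, for every open $U$, that this sequence reaches $X$ in finite time.

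Fix an open non-empty $U$ and $x\in X$. Minimality of $\mathcal{F}$, read as backward minimality (every backward orbit under $\mathcal{F}^{+}$ being dense), furnishes $n\in\mathbb{N}$, a word $g\in\mathcal{F}^{n}$, and $u\in U$ with $g(u)=x$, so $x\in\widetilde F^{n}(U)\subseteq\widetilde G^{n}(U)$. Hence $\bigcup_{n}\widetilde G^{n}(U)=X$. To promote this pointwise exhaustion to a single-$N$ equality, I would appeal to compactness of $X$: provided each $\widetilde G^{n}(U)$ is open --- which is automatic if the $f_i$ are open maps, in particular for an IFS of homeomorphisms --- the ascending open cover $\{\widetilde G^{n}(U)\}_{n}$ of compact $X$ must stabilise, giving $\widetilde G^{N}(U)=X$ for some $N$. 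This secures topological exactness of $\mathcal{F}\cup\{id_X\}$.

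From here the proper attractor statement follows from Theorem~\ref{topexactattractor}. By monotonicity of $\widetilde G$ together with forward minimality, for any $k\in X$ the sets $\widetilde G^{n}(\{k\})$ contain the partial forward orbits of $k$ and so approach $X$ in Hausdorff distance; for a general non-empty compact $K$, picking $k\in K$ and using $\widetilde G^{n}(K)\supseteq\widetilde G^{n}(\{k\})$ gives $d_{H}(\widetilde G^{n}(K),X)\to 0$, so $X$ is an attractor for $\widetilde G$. Theorem~\ref{topexactattractor}(iii) then promotes this to \emph{proper} attractor. The principal obstacle is the finite-time stabilisation: moving from the countable exhaustion $\bigcup_{n}\widetilde G^{n}(U)=X$ to $\widetilde G^{N}(U)=X$ without an explicit openness hypothesis on the $f_i$ is the delicate step, and is where the precise reading of ``minimal'' (taking it as backward minimality together with openness of the generators) has to carry the argument.
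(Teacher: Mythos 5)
Your proof is correct and follows essentially the same route as the paper's: both rest on the identity $\widetilde{F}_*^{\,n}(U)=\bigcup_{i=0}^{n}\widetilde{F}^{i}(U)$ for the operator with $id_X$ adjoined, and then use minimality plus compactness to produce finitely many images of $U$ covering $X$. The only differences are presentational --- the paper directly invokes the finite-cover-by-images characterization of minimality where you rederive it from the ascending open cover, and you make explicit the attractor step (dense forward orbits accumulating in $\widetilde{F}_*^{\,n}(\{k\})$) that the paper leaves implicit.
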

\begin{proof}
Assume that  $U$ is a non-empty open subset of $X$.
Since IFS $\mathcal{F}$ is minimal,
one can find $h_1,\dots,h_r$ in $\mathcal{F}^+$
such that $\{h_i(U) : 1 \leq i \leq r\}$ 
is an open cover of $X$. 
Take $\ell=\max\{|h_i|: 1 \leq i \leq r\}$, where $|h|$ 
denote the
minimal natural number $n$ with $h\in \mathcal{F}^n$.
Stand  $\widetilde{F}$ for the extension of Hutchinson IFS $\mathcal{F}$ and 
$\widetilde{F}_*$ for the extension of Hutchinson IFS $\mathcal{F}\cup id_X$.

Note that, $\widetilde{F}_*= id_X\cup \widetilde{F}$ and 
$\widetilde{F}_*^j(U)=\bigcup_{i=0}^j \widetilde{F}^i(U)$, for all $j\in \mathbb{N}$.
Thus, for every $i\geq \ell$, one can have 
$\widetilde{F}_*^i(U)=X$, which is complete the proof.
\end{proof}
Let an IFS $ \mathcal{F}$ generated by a family of homeomorphisms $\{f_1,\dots,f_k\}$
on a compact metric space $X$.
An IFS $ \mathcal{F}$ is \emph{totally forward minimal} if 
IFS $ \mathcal{F}^n$ is forward minimal, for every $n\in\mathbb{N}$.
Take $\mathcal{F}_-:=\{f_1^{-1},\dots,f_k^{-1}\}$, 
where $f_i^{-1}$ is the inverse of $f_i$, for every $i=1,\dots,k$.
Stand  $\widetilde{F}_-$ for the extension of Hutchinson operator of $\mathcal{F}_-$. 
We say that the IFS $\mathcal{F}$ is \emph{symmetric} if for each $f\in\mathcal{F}$ it holds that $f^{-1}\in \mathcal{F}$.
\begin{theorem}
Every  symmetric totally forward minimal IFS $\mathcal{F}=\{f_1,\dots,f_k\}$ of a family of homeomorphisms
is topologically exact and so, $X$ is a proper attractor for $\mathcal{F}_-$.
 \end{theorem}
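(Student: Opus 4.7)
The plan is to combine two ingredients: symmetry allows padding any composition by a trivially-acting pair $f \circ f^{-1}$ of length two, and total forward minimality of $\mathcal{F}^2$ provides finitely many compositions of length $2\ell$ whose $U$-images cover $X$. Together these bring all the relevant compositions to a common even length.

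First I would observe that symmetry propagates to each power of $\mathcal{F}$: if $h = f_{\omega_n} \circ \cdots \circ f_{\omega_1} \in \mathcal{F}^n$, then $h^{-1} = f_{\omega_1}^{-1} \circ \cdots \circ f_{\omega_n}^{-1}$ is again in $\mathcal{F}^n$. Moreover, for any symmetric IFS $\mathcal{G}$ of homeomorphisms that is forward minimal and any non-empty open $U$, one has $\bigcup_{g \in \mathcal{G}^+} g(U) = X$: by forward minimality each $y \in X$ admits $g \in \mathcal{G}^+$ with $g(y) \in U$, whence $g^{-1} \in \mathcal{G}^+$ by symmetry and $y = g^{-1}(g(y)) \in g^{-1}(U)$.

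Applying this to $\mathcal{G} = \mathcal{F}^2$ (forward minimal by total forward minimality, and symmetric by the above remark), a non-empty open $U$ is covered by the open sets $\{g(U) : g \in (\mathcal{F}^2)^+\}$; compactness yields a finite subcover $g_1, \ldots, g_r$ with $g_i \in \mathcal{F}^{2\ell_i}$. Fix any $f \in \mathcal{F}$; since $f^{-1} \in \mathcal{F}$, the composition $f \circ f^{-1} \in \mathcal{F}^2$ equals $\mathrm{id}_X$. Setting $L = \max_i \ell_i$ and $g_i' = g_i \circ (f \circ f^{-1})^{L - \ell_i}$, each $g_i' \in \mathcal{F}^{2L}$ and $g_i'(U) = g_i(U)$, so
\[
\widetilde{F}^{2L}(U) \;\supseteq\; \bigcup_{i=1}^r g_i'(U) \;=\; X,
\]
which is topological exactness of $\mathcal{F}$. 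For the proper attractor conclusion, view $\mathcal{F}$ as the continuous relation $\phi(x) = \{f_1(x), \ldots, f_k(x)\}$, so that $\widetilde{F}^n$ realizes $\phi^n$ and $\widetilde{F}_-^n$ realizes $\phi^{-n}$ on sets; Theorem~\ref{topexactattractor}(v) then gives that $X$ is an attractor for $\phi^{-1}$, i.e., for $\mathcal{F}_-$, and Theorem~\ref{topexactattractor}(iii) upgrades this to a proper attractor.

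The main obstacle is the parity constraint built into the padding: $f \circ f^{-1}$ has length exactly $2$, so padding only equalizes compositions of the same parity. Using only forward minimality of $\mathcal{F}$ would produce a cover $\{g_i(U)\}$ with $|g_i|$ of mixed parities, after which no amount of padding could bring them to a common length without altering their action on $U$. The hypothesis of total forward minimality is tailored precisely to bypass this by forcing all $|g_i|$ to be even from the outset, so they can be normalized to any common even length $2L \geq 2 \max_i \ell_i$.
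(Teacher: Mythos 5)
Your proof is correct and follows essentially the same route as the paper's: both exploit that symmetry places $\mathrm{id}_X = f\circ f^{-1}$ in $\mathcal{F}^2$ so that the finitely many compositions covering $X$ (obtained from minimality of $\mathcal{F}^2$) can be brought to a common even length --- the paper packages this as an application of its ``recording IFSs'' proposition to $\mathcal{F}^2$, while you carry out the padding explicitly. Your additional step deriving $\bigcup_{g} g(U)=X$ from forward minimality plus symmetry supplies a detail that the paper's cited proposition leaves implicit.
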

 \begin{proof}
Since IFS $\mathcal{F}$ is symmetric, $id_X\in \mathcal{F}^2$.
By assumption  $\mathcal{F}^2$ is minimal and so
by Theorem~\ref{record} the IFS $\mathcal{F}^2\cup id_X=\IFS\mathcal{F}^2$ is topologically exact,
i.e. for every open set $U$, $\widetilde{F}^{2n}(U)=X$, for some $n\in \mathbb{N}$.
This implies that IFS $\mathcal{F}$ is  topologically exact.
 \end{proof}

 but totally forward and totally backward minimality of an IFS  does not guarantee 
 topologically exactness of it.
\begin{example}
While total forward minimality ensures topological exactness for symmetric iterated function systems (IFSs), this property  does not hold in general. 
However, total backward and total forward minimality of an IFS $\mathcal{F}$ does not guarantee its topological exactness.
For instance, consider two circle diffeomorphisms $f_1$ and $f_2$ that are rotations by angles $\alpha, \beta \in \mathbb{R} \setminus \mathbb{Q}$, respectively, 
with $\beta - \alpha \in \mathbb{Q}$. Let $\mathcal{F} = \{f_1, f_2\}$ and $\mathcal{F}_- = \{f_1^{-1}, f_2^{-1}\}$. 
For every $n \in \mathbb{N}$, the map $f_{n\alpha}$ is minimal and belongs to $\mathcal{F}^n$, 
so the IFS $\mathcal{F}$ is totally minimal. Similarly, the IFS $\mathcal{F}_-$ is totally minimal.
However, by Corollary 4.10 in \cite{BC23}, the circle is not an attractor for either IFS $\mathcal{F}$ or IFS $\mathcal{F}_-$.
Thus, by Theorem~\ref{topexactattractor}, neither IFS $\mathcal{F}$ nor IFS $\mathcal{F}_-$ is topologically exact.
\end{example}
Assume \(\mathcal{F} = \{f_1, \dots, f_k\}\) is a backward minimal IFS with a repelling fixed point, where each \(f_i: X \to X\) is a homeomorphism on a compact metric space \(X\). Consequently, \(\mathcal{F}_- = \{f_1^{-1}, \dots, f_k^{-1}\}\) is a forward minimal IFS with the Hutchinson operator \(\widetilde{F}_-(A) = \bigcup_{i=1}^k f_i^{-1}(A)\). It is important to note that Main Corollary~\ref{replling} cannot be directly derived from the results in \cite{Sari}, as the Hutchinson operators \(\widetilde{F}(A) = \bigcup_{i=1}^k f_i(A)\) and \(\widetilde{F}_-\) are not inverses. Specifically, the compositions \(\widetilde{F}_- \circ \widetilde{F}\) and \(\widetilde{F} \circ\widetilde{F}_-  \)
include the identity and other non-identity compositions. Consequently, Main Theorem~\ref{replling} cannot be directly derived from  results in \cite{Sari}, as the forward and backward dynamics exhibit distinct behaviors.
\begin{proof}[Proof of Theorem~\ref{replling}]
Let \(f \in \mathcal{F}\) have a repelling fixed point \(p\).
For any open set \(W \subset X\), 
since \(\mathcal{F}\) is backward minimal, there exists 
\(g \in \mathcal{F}^+\) such that \(p \in g(W) \subseteq \widetilde{F}^{|g|}(W)\). 
As $p$ is repelling fixed point for $f$, choose an open set \(U \ni p\) such that \(U \subset f(U) \cap g(W)\). 
Clearly,  \(U \subset \widetilde{F}^i(U)\) for all \(i \in \mathbb{N}\). 
Since \(\mathcal{F}\) is backward minimal, 
there exist \(h_1, \dots, h_s \in \mathcal{F}^+\) 
such that \(\bigcup_{i=1}^s h_i(U) = X\). 
Let \(m = \max\{ |h_i| : 1 \leq i \leq s \}\), where \(|h|\) 
is the minimal \(n \in \mathbb{N}\) such that \(h \in \mathcal{F}^n\). 
Thus, \(\widetilde{F}^m(U) = X\). 
Hence, \(\widetilde{F}^{m+|g|}(W) = X\), so \(\mathcal{F}\) is topologically exact.
\end{proof}
\begin{figure}
\begin{center}
\label{fig:101}
\begin{tikzpicture}[domain=-1:12.5, scale=0.5,very thick]
\draw (-1,0) -- (5,0);
\draw[dashed](5,0)--(7,0);
\draw[->](7,0)--(12.5,0) node[right] {$x$};
\draw[->] (0,-1) -- (0,12.5) node[above] {$y$};
\draw (12,0) -- (12,12);  
\draw[color=black] plot (12,0) node[below] {$1$};
\draw (0,12) -- (6,12);
\draw[dashed](6,12)--(8,12);
\draw(8,12)--(12,12);
\draw[color=black] plot (0,12) node[left] {$1$};

\draw[color=black] plot (3,0) node[below] {$x_1$};
\draw[dashed] (3,0) -- (3,12);

\draw[color=black] plot (9,0) node[below] {$x_2$};
\draw[dashed] (9,0) -- (9,12);

\draw[color=black] plot (0,9) node[left] {$y_1$};
\draw[dashed] (0,9) -- (3,9);

\draw[color=black] plot (0,11) node[left] {$y_2$};
\draw[dashed] (0,11) -- (3,11);

\draw[color=black] plot (0,2) node[left] {$f_1^m(x_1,y_1)$};
\draw[dashed] (0,2) -- (3,2);

\draw[color=black] plot (0,4) node[left] {$f_1^m(x_1,y_2)$};
\draw[dashed] (0,4) -- (3,4);

\draw[color=black] plot (12,6) node[right] {$f_1^m(x_2,y_1)$};
\draw[dashed] (9,6) -- (12,6);

\draw[color=black] plot (12,8) node[right] {$f_1^m(x_2,y_2)$};
\draw[dashed] (9,8) -- (12,8);

\draw[color=red][thick] (3,4) -- (3.72,12);
\draw[color=red] (3,3.4) -- (3.79,12);
\draw[color=red] (3,2.7) -- (3.82,12);
\draw[color=red] (3,2) -- (3.9,12);

\draw[color=red][thick](3.7,0)-- (4.8,12);
\draw[color=red][thick](3.77,0)-- (4.86,12);
\draw[color=red][thick](3.84,0)-- (4.92,12);
\draw[color=red][thick](3.9,0)--(4.98,12);

\draw[color=red](4.8,0)-- (5.88,12);
\draw[color=red](4.9,0)-- (5.96,12);
\draw[color=red](4.98,0)--(6.06,12);

\draw[color=black] plot (5.6,6) node[right] {$\bullet\bullet\bullet$};
\draw[color=red](9,6)-- (8.45,0);
\draw[color=red](9,7)-- (8.36,0);
\draw[color=red](9,8)--(8.27,0);

\draw[color=red](7.37,0)-- (8.45,12);
\draw[color=red](7.28,0)-- (8.36,12);
\draw[color=red](7.19,0)--(8.27,12);

\end{tikzpicture}

\end{center}
\caption{For some $m\in\mathbb{N}$, the action of $f^m$ on the square bounded by 
$x=x_1$, $x=x_2$, $y=y_1$ and $y=y_2$ determined by the red region. }
\label{asgard}
\end{figure}
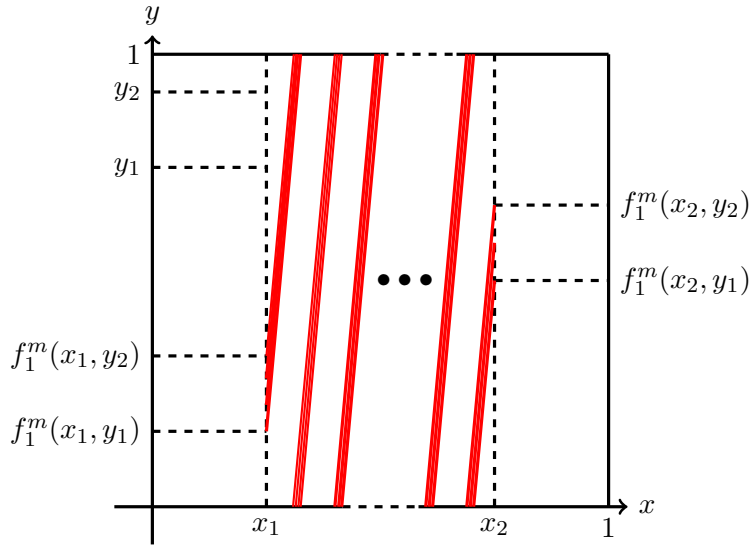

\begin{example}\label{ex:mixing}
Consider the 2-torus $\mathbb{T}^2 = \mathbb{R}^2 / \mathbb{Z}^2$ with the metric induced from $\mathbb{R}^2$. Define homeomorphisms $f_1, f_2: \mathbb{T}^2 \to \mathbb{T}^2$ by:
\[
f_1(x, y) = (x, y + x), \quad f_2(x, y) = (x + y, y),
\]
and let $\mathcal{F} = \{f_1, f_2\}$. The extension of the Hutchinson operator is $\widetilde{F}(A) = f_1(A) \cup f_2(A)$ for $A \subset \mathbb{T}^2$. Consider the point $p = (1/2, 1/2)$. Compute its iterates:
\begin{align*}
  f_1(1/2, 1/2) &= (1/2, 1/2 + 1/2) = (1/2, 0), \\
  f_2(1/2, 1/2) &= (1/2 + 1/2, 1/2) = (0, 1/2), \\
  f_1(1/2, 0) &= (1/2, 0 + 1/2) = (1/2, 1/2), \\
  f_2(1/2, 0) &= (1/2 + 0, 0) = (1/2, 0), \\
  f_1(0, 1/2) &= (0, 1/2 + 0) = (0, 1/2), \\
  f_2(0, 1/2) &= (0 + 1/2, 1/2) = (1/2, 1/2).
\end{align*}
Thus, the orbit of $p$ under $\mathcal{F}$ is the finite set $\{(1/2, 1/2), (1/2, 0), (0, 1/2)\}$. Since $\widetilde{F}^n(\{p\})$ is contained in this finite set for all $n$, it does not cover $\mathbb{T}^2$, so $\mathcal{F}$ is not topologically exact. By Theorem \ref{topexactattractor}(iv), $\mathbb{T}^2$ is not an attractor for $\mathcal{F}_- = \{f_1^{-1}, f_2^{-1}\}$.

Consider open sets $U, V \subset \mathbb{T}^2$. Notice that points with irrational coordinates have dense orbits. Thus, for some $x \in U$, there exists an $n$ and a sequence $\omega = (\omega_1, \ldots, \omega_n) \in \{1, 2\}^n$ such that:
\[
f_\omega^n(x) = f_{\omega_n} \circ \cdots \circ f_{\omega_1}(x) \in V.
\]
Hence, $\mathcal{F}$ is transitive.
To show $\mathcal{F}$ is topologically mixing, see Figure \ref{asgard} for an illustration of $f_1^m$ acting on a square region, stretching it vertically. Similarly, consecutive iterations of $f_2$ stretch a rectangular region horizontally. Thus, for any open sets $U, V \subset \mathbb{T}^2$, there exists $N \in \mathbb{N}$ such that for all $n \geq N$, $\widetilde{F}^n(U) \cap V \neq \emptyset$, confirming definition of topological mixing.
\end{example}

\textbf{Acknowledgments.} We thank Abbas Fakhari, Pablo G. Barrientos and Joel A. Cisneros
for useful discussions and suggestions.

\end{document}